\newtheorem{theorem}{Theorem}[section]
\theoremstyle{plain}
\newtheorem{corollary}{Corollary}[section]
\newtheorem{lemma}{Lemma}[section]
\newtheorem{remark}{Remark}
\numberwithin{equation}{section}
\begin{document}

 \title[A refinement of the companion of Ostrowski inequality]{A refinement of the companion of Ostrowski inequality for functions of bounded variation and applications}
\author[W. J. Liu]{Wenjun Liu}
\address[W. J. Liu]{College of Mathematics and Statistics\\
Nanjing University of Information Science and Technology \\
Nanjing 210044, China}
\email{wjliu@nuist.edu.cn}

\author[Y. Sun]{Yun Sun}
\address[Y. Sun]{College of Mathematics and Statistics\\
Nanjing University of Information Science and Technology \\
Nanjing 210044, China}

 \subjclass[2010]{26D15, 41A55, 41A80, 65C50}
\keywords{companion of Ostrowski inequality; functions of bounded variation; mid-point inequality, trapezoid inequality; probability density function}

\begin{abstract}
In this paper we establish a refinement of the companion of Ostrowski inequality   for functions of bounded variation.
Applications for the trapezoid inequality, the mid-point inequality, and to probability density functions are also given.
\end{abstract}

\maketitle

\section{Introduction}
In 1938, Ostrowski  \cite{o1938} established the following interesting integral
inequality for differentiable mappings with bounded
derivatives:
\begin{theorem}\label{Th1.1}
Let $f:[a,b]\rightarrow\mathbb{R}$ be a differentiable mapping on
$(a,b)$ whose derivative is bounded on $(a,b)$ and denote
$\|f'\|_{\infty}=\displaystyle{\sup_{t\in(a,b)}}|f'(t)|<\infty$.
Then, for all $x\in[a,b]$,  one has the inequality
\begin{equation}\label{1.1}
\left|f(x)-\frac{1}{b-a}\int_{a}^{b}f(t)dt\right|\leq\left[\frac{1}{4}
+\frac{(x-\frac{a+b}{2})^{2}}{(b-a)^{2}}\right](b-a)\|f'\|_{\infty}.
\end{equation}
The constant $\frac{1}{4}$ is sharp in the sense that it can not be
replaced by a smaller one.
\end{theorem}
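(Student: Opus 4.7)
The plan is to prove Theorem \ref{Th1.1} via the classical Peano-kernel technique. First I would introduce the piecewise kernel
\begin{equation*}
K(x,t)=\begin{cases}t-a,& t\in[a,x],\\ t-b,& t\in(x,b],\end{cases}
\end{equation*}
and then, through integration by parts on each of the subintervals $[a,x]$ and $[x,b]$, establish the identity
\begin{equation*}
\int_a^b K(x,t)\,f'(t)\,dt=(b-a)f(x)-\int_a^b f(t)\,dt.
\end{equation*}
The boundary terms $-(x-a)f(a)$ and $(b-x)f(b)$ that arise in the two pieces collapse after combining, while the two $f(x)$ contributions from the endpoints of $[a,x]$ and $[x,b]$ add up to $(b-a)f(x)$.

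Next I would rearrange the identity and divide by $b-a$ to obtain
\begin{equation*}
f(x)-\frac{1}{b-a}\int_a^b f(t)\,dt=\frac{1}{b-a}\int_a^b K(x,t)\,f'(t)\,dt.
\end{equation*}
Taking absolute values and pulling $\|f'\|_\infty$ out of the integral yields the estimate
\begin{equation*}
\left|f(x)-\frac{1}{b-a}\int_a^b f(t)\,dt\right|\le\frac{\|f'\|_\infty}{b-a}\int_a^b|K(x,t)|\,dt.
\end{equation*}
A direct computation, splitting the integral at $t=x$, gives $\int_a^b|K(x,t)|\,dt=\frac{(x-a)^2+(b-x)^2}{2}$, and an elementary algebraic identity rewrites this quantity as $\left[\frac{1}{4}+\frac{(x-\frac{a+b}{2})^2}{(b-a)^2}\right](b-a)^2$, which after dividing by $b-a$ yields exactly the right-hand side of \eqref{1.1}.

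For the sharpness claim, I would exhibit an extremal family showing that the constant $1/4$ cannot be replaced by anything smaller. The standard choice is $f(t)=t-\frac{a+b}{2}$ (or, to handle $|\cdot|$, the function $f(t)=|t-\frac{a+b}{2}|$ with $x$ taken at an endpoint), for which $\|f'\|_\infty=1$ and the midpoint evaluation $x=\frac{a+b}{2}$ turns \eqref{1.1} into an equality. Verifying that equality holds in this instance would complete the proof.

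The routine integration by parts and the algebraic rewriting of $\frac{(x-a)^2+(b-x)^2}{2}$ are mechanical; the only real subtlety is handling the kernel at $t=x$, where $K(x,\cdot)$ is discontinuous, and making sure the integration-by-parts computation is carried out \emph{separately} on the two subintervals so that the jump of $K$ produces the desired $(b-a)f(x)$ term rather than being mishandled.
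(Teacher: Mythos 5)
The paper itself gives no proof of Theorem \ref{Th1.1}: it is the classical Ostrowski inequality, quoted from \cite{o1938} as background, so there is nothing internal to compare against. Your derivation of the inequality itself is the standard and correct Peano-kernel (Montgomery identity) argument: the identity $\int_a^b K(x,t)f'(t)\,dt=(b-a)f(x)-\int_a^b f(t)\,dt$ holds (note, though, that with your kernel the boundary contributions at $t=a$ and $t=b$ are simply zero because $K(x,a)=K(x,b)=0$; there are no terms $-(x-a)f(a)$ or $(b-x)f(b)$ that need to ``collapse''), and the computation $\int_a^b|K(x,t)|\,dt=\frac{(x-a)^2+(b-x)^2}{2}=\left[\frac14+\frac{(x-\frac{a+b}{2})^2}{(b-a)^2}\right](b-a)^2$ is right, so the displayed bound follows.

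The genuine gap is in the sharpness claim. With $f(t)=t-\frac{a+b}{2}$ and $x=\frac{a+b}{2}$ the left-hand side of \eqref{1.1} is $0$ while the right-hand side is $\frac{b-a}{4}$, so this is nowhere near equality; and $f(t)=\left|t-\frac{a+b}{2}\right|$ evaluated at an endpoint gives $\frac{b-a}{4}$ against a right-hand side of $\frac{b-a}{2}$, again not equality. Moreover, merely exhibiting equality at some $(f,x)$ does not show that the constant $\frac14$ cannot be lowered, because the quadratic term could absorb the slack; you must test at a point where that term vanishes. The correct argument is: suppose \eqref{1.1} holds with $C<\frac14$ in place of $\frac14$, take $x=\frac{a+b}{2}$ (so the second term is $0$), and use $f(t)=\left|t-\frac{a+b}{2}\right|$, for which $f\left(\frac{a+b}{2}\right)=0$, $\frac{1}{b-a}\int_a^b f=\frac{b-a}{4}$, $\|f'\|_\infty=1$, yielding $\frac{b-a}{4}\le C(b-a)$, a contradiction. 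Since this $f$ fails the differentiability hypothesis at the midpoint, you should either smooth it and pass to the limit, or note that the same contradiction is obtained from a sequence of differentiable approximants; as written, your sharpness step does not go through.
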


 This inequality has attracted considerable interest over the years, and many
authors proved generalizations,  modifications and applications of it.
In \cite{d2001mia}, Dragomir extended this result to the larger class of functions of
bounded variation,   as follows:

\begin{theorem}\label{Th1.1'}
Let $f:[a,b]\rightarrow\mathbb{R}$ be a function of bounded variation on
$[a,b]$. Denote by $\bigvee_a^b (f)$ its total variation on $[a, b]$. Then, for all $x\in[a,b]$, one has the inequality
\begin{equation}\label{1.2}
\left|f(x)-\frac{1}{b-a}\int_{a}^{b}f(t)dt\right|\leq\left[\frac{1}{2}
+\left|\frac{x-\frac{a+b}{2}}{b-a}\right|\right]\bigvee_a^b (f).
\end{equation}
The constant $\frac{1}{2}$ is sharp in the sense that it can not be
replaced by a smaller one.
\end{theorem}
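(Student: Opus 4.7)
The plan is to use a Peano-type (Montgomery) kernel together with the classical estimate for Riemann--Stieltjes integrals driven by functions of bounded variation. First I would introduce
\begin{equation*}
p(x,t):=\begin{cases} t-a, & a\le t\le x,\\ t-b, & x<t\le b, \end{cases}
\end{equation*}
and establish the identity
\begin{equation*}
(b-a)f(x)-\int_a^b f(t)\,dt=\int_a^b p(x,t)\,df(t),
\end{equation*}
by splitting the right-hand integral at $x$ and applying integration by parts (in the Riemann--Stieltjes sense) on each of the two pieces. Each piece contributes $(x-a)f(x)-\int_a^x f(t)\,dt$ and $(b-x)f(x)-\int_x^b f(t)\,dt$ respectively, which add up as claimed.

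Next I would invoke the standard bound
\begin{equation*}
\left|\int_a^b h(t)\,dg(t)\right|\le \sup_{t\in[a,b]}|h(t)|\cdot\bigvee_a^b(g),
\end{equation*}
valid for continuous $h$ and $g$ of bounded variation, with $h=p(x,\cdot)$ and $g=f$. On $[a,x]$ one has $|p(x,t)|=t-a\le x-a$, and on $(x,b]$ one has $|p(x,t)|=b-t\le b-x$, so the supremum equals $\max\{x-a,\,b-x\}$. Using the elementary identity $\max\{u,v\}=\tfrac{u+v}{2}+\tfrac{|u-v|}{2}$ with $u=x-a$, $v=b-x$ converts this supremum into $\tfrac{b-a}{2}+\bigl|x-\tfrac{a+b}{2}\bigr|$. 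Dividing through by $b-a$ yields \eqref{1.2}.

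For sharpness of the constant $\tfrac12$ I would test the inequality at the endpoint $x=a$, where the bracketed factor collapses to $1$, against the one-jump function $f(a)=1$, $f(t)=0$ for $t\in(a,b]$. Then $\bigvee_a^b(f)=1$, $f(a)=1$, and $\int_a^b f=0$, so both sides of \eqref{1.2} equal $1$; any smaller constant would fail on this example.

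The only delicate point is the use of Riemann--Stieltjes integration by parts for $f$ merely of bounded variation (rather than absolutely continuous), together with the handling of the jump of $p(x,\cdot)$ at $t=x$; the two one-sided values $x-a$ and $x-b$ have opposite signs, which is what forces the supremum to be a maximum of $x-a$ and $b-x$ rather than their sum. Neither is a real obstacle; once the Montgomery-type identity is in place, the rest is bookkeeping.
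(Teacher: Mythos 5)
Your argument is correct in substance, and it is essentially the classical proof of Dragomir for this result; the paper itself only quotes the theorem from the cited reference without proof, so there is nothing different in the source to compare against. The Montgomery-type identity obtained by splitting at $x$ and integrating by parts on $[a,x]$ and $[x,b]$ is right, as is the reduction $\max\{x-a,\,b-x\}=\tfrac{b-a}{2}+\left|x-\tfrac{a+b}{2}\right|$, and your one-jump example at $x=a$ does show that the constant $\tfrac12$ cannot be lowered. One point needs a more careful formulation than you give it: the bound $\left|\int_a^b h\,dg\right|\le \sup_{[a,b]}|h|\cdot\bigvee_a^b(g)$ is invoked ``for continuous $h$,'' yet $p(x,\cdot)$ is discontinuous at $t=x$; worse, if $f$ also jumps at $x$, the Riemann--Stieltjes integral $\int_a^b p(x,t)\,df(t)$ taken over the whole interval need not exist at all, so the identity and the estimate should both be stated for the two pieces $\int_a^x (t-a)\,df(t)$ and $\int_x^b (t-b)\,df(t)$, on each of which the integrand is continuous. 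Applying the sup bound piecewise gives $(x-a)\bigvee_a^x(f)+(b-x)\bigvee_x^b(f)\le \max\{x-a,\,b-x\}\bigvee_a^b(f)$, which is exactly the quantity you wanted, so the fix costs nothing; but as written, the appeal to a single whole-interval estimate for a discontinuous kernel is the one step that is not literally justified, and your closing remark about the opposite signs of the one-sided values ``forcing a maximum rather than a sum'' is not the actual reason -- the maximum comes from the piecewise estimate combined with additivity of the total variation.
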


The best inequality one can obtain from \eqref{1.1} and \eqref{1.2} is the midpoint inequality. The corresponding version for the generalised trapezoid inequality was obtained by Cerone, Dragomir, and Pearce in \cite{cdp2000}, from which one can derive the trapezoid inequality.
Recently, by using a critical Lemma, Dragomir \cite{d2008} proved
refinement of the generalised trapezoid and Ostrowski inequalities for functions of bounded variation, the particular cases of which provide  refinements of the trapezoid
and mid-point inequalities.

In \cite{gs2002}, Guessab and Schmeisser, in the effort of incorporating together
the mid-point and trapezoid inequalities, proved a companion of
Ostrowski's inequality.
Motivated by \cite{gs2002}, Dragomir \cite{d2002} proved some companions of Ostrowski's
inequality for functions of bounded variation, as follows:
\begin{theorem}\label{Th1.3}
Assume that the function $f:[a,b]\rightarrow\mathbb{R}$ is of bounded variation on
$[a,b]$.
Then the following inequalities
\begin{align}\label{1.3}
&\left|\frac{f(x)+f(a+b-x)}{2}-\frac{1}{b-a}\int_{a}^{b}f(t)dt\right|
\nonumber
\\
\leq&\frac{1}{b-a}\left[(x-a)\bigvee_a^x (f)+\left(\frac{a+b}{2}-x\right)\bigvee_x^{a+b-x} (f)+(x-a)\bigvee_{a+b-x}^b (f)\right]
\nonumber
\\
\leq&\left[\frac{1}{4}
+\left|\frac{x-\frac{3a+b}{4}}{b-a}\right|\right]\bigvee_a^b (f)
\end{align}
hold for all $x\in[a,\frac{a+b}{2}]$. The
constant $\frac{1}{4}$ is best possible.

The best inequality one can obtain from \eqref{1.3} is the trapezoid type inequality, namely,
\begin{align}
\left|\frac{f\left(\frac{3a+b}{4}\right)+f\left(\frac{a+3b}{4}\right)}{2}
-\frac{1}{b-a}\int_{a}^{b}f(t)dt\right|\leq \frac{1}{4} \bigvee_a^b (f).
\label{1.4}
\end{align}
Here the
constant $\frac{1}{4}$ is also the best possible.
\end{theorem}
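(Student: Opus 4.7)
The plan is to represent the quantity in question as a single Riemann--Stieltjes integral against $df$ with a carefully chosen piecewise kernel, then estimate each piece using the standard bound $\bigl|\int_c^d p(t)\,dv(t)\bigr| \le \sup_{[c,d]} |p| \cdot \bigvee_c^d (v)$ valid when $p$ is continuous and $v$ of bounded variation. Concretely, I would define
\[
K(x,t) = \begin{cases} t-a, & t \in [a,x], \\ t - \tfrac{a+b}{2}, & t \in (x, a+b-x], \\ t-b, & t \in (a+b-x, b], \end{cases}
\]
and apply integration by parts on each of the three subintervals. The key computation is that the boundary contributions at $t=x$ and $t=a+b-x$ combine with coefficients $(x-a)+(\tfrac{a+b}{2}-x)=\tfrac{b-a}{2}$ in front of both $f(x)$ and $f(a+b-x)$, while the contributions at the endpoints $a$ and $b$ vanish, yielding
\[
\int_a^b K(x,t)\,df(t) = \tfrac{b-a}{2}\bigl[f(x)+f(a+b-x)\bigr] - \int_a^b f(t)\,dt.
\]

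Dividing by $b-a$ produces the expression inside the absolute value on the left of \eqref{1.3}. Applying the sup--variation estimate on each subinterval then gives the first upper bound: on $[a,x]$ the supremum of $|t-a|$ is $x-a$; on $[x, a+b-x]$ the supremum of $|t-\tfrac{a+b}{2}|$ is $\tfrac{a+b}{2}-x$ (attained at both endpoints by symmetry, and here we need $x \le \tfrac{a+b}{2}$); on $[a+b-x, b]$ the supremum of $|t-b|$ is $x-a$. For the second inequality in \eqref{1.3}, I would factor the maximum of the three coefficients outside, use additivity of the total variation to telescope the three subsums to $\bigvee_a^b (f)$, and then apply the identity $\max\{u,v\} = \tfrac{1}{2}(u+v) + \tfrac{1}{2}|u-v|$ with $u=x-a$ and $v=\tfrac{a+b}{2}-x$. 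A short calculation gives $\max\{x-a, \tfrac{a+b}{2}-x\} = \tfrac{b-a}{4} + |x - \tfrac{3a+b}{4}|$, which after division by $b-a$ is exactly the coefficient in the final bound.

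For the sharpness of the constant $\tfrac{1}{4}$, I would test against a simple function of bounded variation that concentrates all its variation at the critical points, e.g.\ an indicator-type step function jumping at $\tfrac{3a+b}{4}$ (or a translate thereof), and verify that equality is attained in \eqref{1.4}; the general sharpness in \eqref{1.3} follows by a similar jump function chosen so that the kernel $K(x,\cdot)$ achieves its sup on the support of $df$. Finally, the trapezoid-type inequality \eqref{1.4} is an immediate specialization: take $x=\tfrac{3a+b}{4}$ so that $a+b-x = \tfrac{a+3b}{4}$ and the factor $|x-\tfrac{3a+b}{4}|/(b-a)$ vanishes.

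The main obstacle I anticipate is bookkeeping in the integration by parts: getting the signs right at the interior boundary points $x$ and $a+b-x$ so that the coefficients of $f(x)$ and $f(a+b-x)$ collapse to $\tfrac{b-a}{2}$, and confirming that the requirement $x \in [a,\tfrac{a+b}{2}]$ is precisely what makes $\tfrac{a+b}{2}-x \ge 0$ and ensures the supremum on the middle interval is as claimed. The sharpness argument is likely the most delicate step, but it should reduce to a direct verification on a one- or two-jump function.
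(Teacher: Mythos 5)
Your proposal is correct and takes essentially the same route as the paper's machinery: the kernel you define is exactly \eqref{2.6}, your integration-by-parts identity is \eqref{2.7}, the subinterval sup--variation estimates are the last line of Lemma \ref{lem2.1} (see \eqref{2.1}), and the final coefficient comes from $\max\{x-a,\tfrac{a+b}{2}-x\}=\tfrac{b-a}{4}+\left|x-\tfrac{3a+b}{4}\right|$, which is the standard argument of Dragomir's cited proof (the paper itself only quotes Theorem \ref{Th1.3} from \cite{d2002}). Your sharpness check also works: the indicator function with jump at $\tfrac{3a+b}{4}$ gives equality in \eqref{1.4}, so the constant $\tfrac{1}{4}$ cannot be improved.
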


For other related results, the reader may be refer to \cite{a20111, a20112, a20113, a1995, a2008, bdg2009, d2001, hn2011, l2008, l2010, lxw2010, l2007, l2009, s20101, s2010, ss2011, thd2008, thyc2011, u2003, v2011} and the
references therein.

The main aim of this paper is to establish a refinement of the companion of Ostrowski inequality \eqref{1.3} for functions of bounded variation.
Applications for the trapezoid inequality, the mid-point inequality, the trapezoid type inequality \eqref{1.4}, and to probability density functions are also given.

\section{Main results}

 To prove our main results, we need the following lemma, which is a slight improvement of \cite[Lemma 2.1]{d2008}.
\begin{lemma}\label{lem2.1}
 Let $u, f:[a,b]\rightarrow\mathbb{R}$. If $u$ is continuous on $[a,b]$ and $f$ is of bounded
variation on  $[c,b]\supseteq [a,b]$, then
\begin{align}
\left|\int_a^b u(t)df(t)\right|\le & \int_a^b |u(t)|d\left(\bigvee_c^t (f)\right)\nonumber\\
\le &\left[\bigvee_a^b (f)\right]^{\frac{1}{q}}\left[\int_a^b |u(t)|^pd\left(\bigvee_c^t (f)\right)\right]^{\frac{1}{p}}
\quad {\it if}\ p>1,\ \frac{1}{p}+\frac{1}{q}=1\nonumber\\
 \le & \max_{t\in [a,b]}|u(t)|\bigvee_a^b (f). \label{2.1}
\end{align}
\end{lemma}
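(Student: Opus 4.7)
The plan is to treat the three inequalities as a chain, all centred on the nondecreasing total variation function $v(t):=\bigvee_c^t(f)$, which by hypothesis is well-defined on $[c,b]\supseteq[a,b]$ and satisfies the additivity identity $v(b)-v(a)=\bigvee_a^b(f)$.

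For the leftmost bound I will go back to Riemann--Stieltjes sums. Because $u$ is continuous and $f$ has bounded variation, the integral $\int_a^b u(t)\,df(t)$ exists. For any tagged partition $a=t_0<t_1<\cdots<t_n=b$ with tags $\xi_i\in[t_i,t_{i+1}]$, the triangle inequality combined with the elementary bound $|f(t_{i+1})-f(t_i)|\le v(t_{i+1})-v(t_i)$ gives
\[
\left|\sum_i u(\xi_i)\bigl(f(t_{i+1})-f(t_i)\bigr)\right|\le \sum_i |u(\xi_i)|\bigl(v(t_{i+1})-v(t_i)\bigr).
\]
Since $|u|$ is continuous and $v$ is monotone, the right-hand sum converges as the mesh tends to zero to the Stieltjes integral $\int_a^b |u(t)|\,dv(t)$, and passing to the limit on both sides yields the first inequality.

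For the middle estimate I will view $dv$ as a finite positive measure on $[a,b]$ of total mass $\bigvee_a^b(f)$ and invoke H\"older's inequality in the form
\[
\int_a^b |u(t)|\,dv(t)\le \left(\int_a^b 1\,dv(t)\right)^{1/q}\left(\int_a^b |u(t)|^p\,dv(t)\right)^{1/p},
\]
so that the first factor equals $\left(\bigvee_a^b(f)\right)^{1/q}$. The rightmost bound then follows either by estimating $|u(t)|^p\le \max_{t\in[a,b]}|u(t)|^p$ under the H\"older integral and collapsing the exponents via $1/p+1/q=1$, or directly from the middle expression by pulling the supremum outside $\int_a^b|u|\,dv$ and using $\int_a^b dv=\bigvee_a^b(f)$.

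I expect the main, and essentially only, technical point to be the Riemann--Stieltjes limit used in the first step: one must verify that the discrete Stieltjes sums against the monotone function $v$ converge to $\int_a^b|u|\,dv$, which is standard given continuity of $|u|$. The enlargement $[c,b]\supseteq[a,b]$ relative to \cite[Lemma 2.1]{d2008} is innocuous because only monotonicity of $v$ on $[a,b]$ and the additivity $v(b)-v(a)=\bigvee_a^b(f)$ are used; the proof scheme is otherwise unchanged. Once the first inequality is established, the H\"older step and the bound by $\max|u|$ are routine manipulations with a positive Stieltjes measure.
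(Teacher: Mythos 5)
Your proposal is correct, and it is essentially the argument behind the paper's proof, which consists only of the citation to \cite[Lemma 2.1]{d2008}: the first inequality via Riemann--Stieltjes sums against the nondecreasing function $v(t)=\bigvee_c^t(f)$, then H\"older's inequality for the positive Stieltjes measure $dv$ of total mass $\bigvee_a^b(f)$, then the trivial supremum bound. You also correctly note that the enlargement to $[c,b]$ changes nothing, since only monotonicity of $v$ and the additivity $v(b)-v(a)=\bigvee_a^b(f)$ are used.
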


\begin{proof}
See \cite[Lemma 2.1]{d2008}.
\end{proof}

The following result may be stated.

\begin{theorem}\label{Th2.1}
Assume that the function $f:[a,b]\rightarrow\mathbb{R}$ is of bounded variation on
$[a,b]$.
Then
\begin{align}\label{2.2}
\left|\frac{f(x)+f(a+b-x)}{2}-\frac{1}{b-a}\int_{a}^{b}f(t)dt\right|
\leq Q(x)
\end{align}
for all $x\in[a,\frac{a+b}{2}]$, where
\begin{align}\label{2.3}
Q(x):=&\frac{1}{b-a}\left[ 2 \left(\frac{3a+b}{4}-x\right)\bigvee_x^{a+b-x}(f)+\int_a^{\frac{a+b}{2}}\mathrm{sgn}(x-t)\bigvee_t^{a+b-t}(f)dt\right]\nonumber\\
\leq&\frac{1}{b-a}\left[(x-a)\bigvee_a^x (f)+\left(\frac{a+b}{2}-x\right)\bigvee_x^{a+b-x} (f)+(x-a)\bigvee_{a+b-x}^b (f)\right]
\nonumber
\\
\leq&\left[\frac{1}{4}
+\left|\frac{x-\frac{3a+b}{4}}{b-a}\right|\right]\bigvee_a^b (f).
\end{align}

We also have
\begin{align}\label{2.4}
Q(x)\leq &\frac{1}{b-a}\left[\bigvee_a^b (f)\right]^{\frac{1}{q}}\left\{   \left[  \left(\frac{a+b}{2}-x\right)^p- \left(x-a\right)^p   \right]\bigvee_x^{a+b-x}(f)\right.\nonumber\\&\left.+p\int_a^{\frac{a+b}{2}}r_p(x,t)\mathrm{sgn}(x-t)\bigvee_t^{a+b-t}(f)dt\right\}^\frac{1}{p}\nonumber\\
\leq&\frac{1}{b-a}\left[\bigvee_a^b (f)\right]^{\frac{1}{q}}\left\{   (x-a)^p\bigvee_a^x (f)+\left(\frac{a+b}{2}-x\right)^p\bigvee_x^{a+b-x} (f)+(x-a)^p\bigvee_{a+b-x}^b (f)\right\}^\frac{1}{p}\nonumber\\
\leq&\left[\frac{1}{4}
+\left|\frac{x-\frac{3a+b}{4}}{b-a}\right|\right]\bigvee_a^b (f),
\end{align}
where $p>1,\ \frac{1}{p}+\frac{1}{q}=1$ and $r_p:[a,b]^2\rightarrow\mathbb{R}$ with
\begin{align}
r_p(x,t):=\left\{ {\begin{aligned} & (t-a)^{p-1}, && t\in[a,x], \\
& \left(\frac{a+b}{2}-x\right)^{p-1}, && t\in\left(x, \frac{a+b}{2}\right].
\end{aligned}}\right. \label{2.5}
\end{align}
\end{theorem}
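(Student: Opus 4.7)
The strategy is to reduce the left-hand side of (2.2) to an integral over the half-interval $[a,(a+b)/2]$ with respect to the symmetrized function $h(t):=f(t)+f(a+b-t)$, and then to apply Lemma~\ref{lem2.1}. Using the piecewise kernel $K(x,\cdot)$ equal to $t-a$ on $[a,x]$, $t-\frac{a+b}{2}$ on $(x,a+b-x]$, and $t-b$ on $(a+b-x,b]$, a piecewise Riemann--Stieltjes integration by parts yields
\[\int_a^b K(x,t)\,df(t)=\frac{b-a}{2}[f(x)+f(a+b-x)]-\int_a^b f(t)\,dt.\]
The substitution $s=a+b-t$ on the two sub-pieces of the kernel lying in $[(a+b)/2,b]$, together with integration by parts, rewrites this identity as
\[\int_a^b K(x,t)\,df(t) = \int_a^x (t-a)\,dh(t)+\int_x^{(a+b)/2}\left(t-\tfrac{a+b}{2}\right)dh(t),\]
which transfers the problem to the half-interval $[a,(a+b)/2]$ and the function $h$.

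Next I would apply Lemma~\ref{lem2.1} piecewise, taking $c=a$ for the first piece and $c=x$ for the second. The crucial link with $V(t):=\bigvee_t^{a+b-t}(f)$ is the measure-level domination $d\bigvee_c^t(h)\le -dV(t)$ on $[a,(a+b)/2]$, which follows from the pointwise estimate
\[\bigvee_{t_1}^{t_2}(h)\le\bigvee_{t_1}^{t_2}(f)+\bigvee_{a+b-t_2}^{a+b-t_1}(f)=V(t_1)-V(t_2)\quad(a\le t_1\le t_2\le(a+b)/2),\]
obtained by sub-additivity of total variation applied to $h=f+g$ with $g(t):=f(a+b-t)$. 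Substituting and integrating by parts twice give
\[\int_a^x (t-a)\,d\bigvee_a^t(h)\le\int_a^x V(t)\,dt-(x-a)V(x),\]
\[\int_x^{(a+b)/2}\left(\tfrac{a+b}{2}-t\right)d\bigvee_x^t(h)\le\left(\tfrac{a+b}{2}-x\right)V(x)-\int_x^{(a+b)/2}V(t)\,dt.\]
Adding, dividing by $b-a$, and collecting the $V(x)$-coefficient as $2[(3a+b)/4-x]$ produces (2.2) with the first form of $Q(x)$ in (2.3). The remaining inequalities of (2.3) are cosmetic: bound $V(t)\le V(a)=\bigvee_a^b(f)$ on $[a,x]$ (using $V(a)-V(x)=\bigvee_a^x(f)+\bigvee_{a+b-x}^b(f)$), and then apply $\max\{x-a,(a+b)/2-x\}=(b-a)/4+|x-(3a+b)/4|$.

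For the $p,q$-version (2.4), apply the H\"older form of Lemma~\ref{lem2.1} to each piece and combine via the discrete H\"older inequality $A_1B_1+A_2B_2\le(A_1^q+A_2^q)^{1/q}(B_1^p+B_2^p)^{1/p}$. The identity $A_1^q+A_2^q=\bigvee_a^x(h)+\bigvee_x^{(a+b)/2}(h)=\bigvee_a^{(a+b)/2}(h)\le\bigvee_a^b(f)$ extracts the factor $[\bigvee_a^b(f)]^{1/q}$, while the $B_i^p$-integrals are bounded using the same measure domination together with the elementary estimate $((a+b)/2-t)^{p-1}\le((a+b)/2-x)^{p-1}$ on $[x,(a+b)/2]$, producing the constant second branch of $r_p(x,t)$. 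The subsequent inequalities in (2.4) then mirror those of (2.3).

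The main obstacle is upgrading the pointwise bound $\bigvee_a^t(h)\le V(a)-V(t)$ to the corresponding inequality of Stieltjes measures, since only the latter justifies the integration by parts against $V(t)$ that delivers the refined form of $Q(x)$. Once this measure domination is secured, the rest of the argument is careful bookkeeping around the symmetrization $h=f+g$ and the IBP rearrangements.
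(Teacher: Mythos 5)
Your route to \eqref{2.2}--\eqref{2.3} is genuinely different from the paper's and it works. The paper keeps the three kernel pieces on all of $[a,b]$, applies Lemma \ref{lem2.1} to each with $c=a$, integrates by parts to produce $\int\bigvee_a^t(f)\,dt$ terms, and only at the end substitutes $t=a+b-s$ to assemble $\bigvee_t^{a+b-t}(f)$. You instead symmetrize first, folding everything onto $[a,\frac{a+b}{2}]$ against $h(t)=f(t)+f(a+b-t)$, and then dominate $d\bigl(\bigvee_c^t(h)\bigr)$ by $-dV(t)$, $V(t)=\bigvee_t^{a+b-t}(f)$, via subadditivity of variation. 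Your folding identity is correct, the pointwise bound $\bigvee_{t_1}^{t_2}(h)\le V(t_1)-V(t_2)$ is correct, and the ``measure-level'' upgrade you flag as the main obstacle is unproblematic: for continuous nonnegative $u$ and nondecreasing integrators whose increments are dominated interval by interval, $\int u\,dF\le\int u\,dG$ follows by comparing Riemann--Stieltjes sums termwise, so no measure theory is needed. The bookkeeping then reproduces $Q(x)$ exactly (the coefficient $-(x-a)+(\frac{a+b}{2}-x)=2(\frac{3a+b}{4}-x)$ checks out), and your derivation of the remaining lines of \eqref{2.3}, and of the second and third lines of \eqref{2.4} via discrete H\"older together with $\bigvee_a^{(a+b)/2}(h)\le\bigvee_a^b(f)$, is sound. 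What your approach buys is economy: one subadditivity estimate replaces the paper's back-and-forth substitution, at the cost of an extra triangle-type inequality that is harmless here.

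The genuine gap is in the first line of \eqref{2.4}. You say the constant second branch of $r_p$ is ``produced'' by the estimate $(\frac{a+b}{2}-t)^{p-1}\le(\frac{a+b}{2}-x)^{p-1}$ on $[x,\frac{a+b}{2}]$, but after integration by parts that factor multiplies $-pV(t)\le 0$, so the estimate moves the bound the wrong way: your correct intermediate bound $B_1^p+B_2^p\le\bigl[(\frac{a+b}{2}-x)^p-(x-a)^p\bigr]V(x)+p\int_a^x(t-a)^{p-1}V(t)\,dt-p\int_x^{(a+b)/2}(\frac{a+b}{2}-t)^{p-1}V(t)\,dt$ is \emph{larger} than the braced expression in \eqref{2.4}, not smaller. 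In fact the first line of \eqref{2.4} is false as stated: take $[a,b]=[0,1]$, $x=a=0$, $f=\chi_{[1/4,3/4]}$ and $p=3$; then $Q(0)=\frac12$, while the braced quantity equals $2^{-p}(2-p)<0$. (The paper's own proof has the same defect: it asserts that $R(x)$ \emph{equals} the $r_p$-expression, whereas in fact $R(x)\ge$ it, with strict inequality in general.) Your bound with the variable factor $(\frac{a+b}{2}-t)^{p-1}$ in place of the constant branch of $r_p$ is the correct form of this refinement, and the subsequent lines of \eqref{2.4} do follow from it.
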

\begin{proof}
Define the kernel $K(x,t)$ by
\begin{align}
K(x,t):=\left\{ {\begin{aligned} & t-a, && t\in[a,x], \\
& t-\frac{a+b}{2}, && t\in(x,a+b-x], \\
& t-b, && t\in(a+b-x,b],
\end{aligned}}\right. \label{2.6}
\end{align}
for all $x\in[a,\frac{a+b}{2}]$. Using the integration by parts formula for Riemann-Stieltjes integrals, we obtain
(see  \cite{d2002})
\begin{align}
\frac{1}{b-a}\int_{a}^{b}K(x,t)df(t)=\frac{f(x)+f(a+b-x)}{2}-\frac{1}{b-a}\int_{a}^{b}f(t)dt.
\label{2.7}
\end{align}

 Now, if $f$ is of bounded variation on
$[a,b]$,  then on taking the modulus, applying the first inequality
in \eqref{2.1} and making a substitution of the form  $t=a+b-s$, we deduce
\begin{align*}
&\left|\int_{a}^{b}K(x,t)df(t)\right|\\
\le &
\left|\int_a^x (t-a)df(t)\right|+\left|\int_x^{a+b-x} \left(t-\frac{a+b}{2}\right)df(t)\right|+\left|\int_{a+b-x}^b (t-b)df(t)\right|\\
\le &
 \int_a^x |t-a|d\left(\bigvee_a^t (f)\right)+\int_x^{a+b-x} \left|t-\frac{a+b}{2}\right|d\left(\bigvee_a^t (f)\right)+\int_{a+b-x}^b |t-b|d\left(\bigvee_a^t (f)\right)\\
=& \int_a^x (t-a)d\left(\bigvee_a^t (f)\right)-\int_x^\frac{a+b}{2} \left(t-\frac{a+b}{2}\right)d\left(\bigvee_a^t (f)\right)\\
&+\int_\frac{a+b}{2}^{a+b-x} \left(t-\frac{a+b}{2}\right)d\left(\bigvee_a^t (f)\right)+\int_{a+b-x}^b (b-t)d\left(\bigvee_a^t (f)\right)\\
=& (x-a)\bigvee_a^x (f)-\int_a^x \left(\bigvee_a^t (f)\right)dt+\left(x-\frac{a+b}{2}\right)\bigvee_a^x (f)+\int_x^\frac{a+b}{2} \left(\bigvee_a^t (f)\right)dt\\
&+\left(\frac{a+b}{2}-x\right)\bigvee_a^{a+b-x}(f)-\int_\frac{a+b}{2}^{a+b-x} \left(\bigvee_a^t (f)\right)dt-(x-a)\bigvee_a^{a+b-x}(f)+\int_{a+b-x}^b \left(\bigvee_a^t (f)\right)dt\\
=&  2 \left(\frac{3a+b}{4}-x\right)\bigvee_x^{a+b-x}(f)-\int_a^{\frac{a+b}{2}}\mathrm{sgn}(x-t)\bigvee_a^t(f)dt
-\int_{\frac{a+b}{2}}^b\mathrm{sgn}(a+b-x-t)\bigvee_a^t(f)dt\\
=&  2 \left(\frac{3a+b}{4}-x\right)\bigvee_x^{a+b-x}(f)-\int_a^{\frac{a+b}{2}}\mathrm{sgn}(x-t)\bigvee_a^t(f)dt
+\int_a^{\frac{a+b}{2}} \mathrm{sgn}(x-s)\bigvee_a^{a+b-s}(f)ds\\
=&  2 \left(\frac{3a+b}{4}-x\right)\bigvee_x^{a+b-x}(f)+\int_a^{\frac{a+b}{2}}\mathrm{sgn}(x-t)\bigvee_t^{a+b-t}(f)dt
\\
=& (b-a)Q(x),
\end{align*}
for all $x\in[a,\frac{a+b}{2}]$, and the inequality \eqref{2.2} is proved.

Now, since $\bigvee_a^\cdot$ is monotonic nondecreasing on $[a, b]$, then
$$\int_a^x \left(\bigvee_a^t (f)\right)dt\ge 0,\quad \int_x^\frac{a+b}{2} \left(\bigvee_a^t (f)\right)dt\le \left(\frac{a+b}{2}-x\right)\bigvee_a^\frac{a+b}{2} (f),$$
$$\int_\frac{a+b}{2}^{a+b-x} \left(\bigvee_a^t (f)\right)dt\ge \left(\frac{a+b}{2}-x\right)\bigvee_a^\frac{a+b}{2} (f)\quad {\it and}\quad
\int_{a+b-x}^b \left(\bigvee_a^t (f)\right)dt\le (x-a)\bigvee_{a}^b (f),$$
for  all $x\in[a,\frac{a+b}{2}]$, which gives
\begin{align*}
 (b-a)Q(x)\le &  2 \left(\frac{3a+b}{4}-x\right)\bigvee_x^{a+b-x}(f)+(x-a)\bigvee_a^{b}(f) \\
 =& (x-a)\bigvee_a^x (f)+\left(\frac{a+b}{2}-x\right)\bigvee_x^{a+b-x} (f)+(x-a)\bigvee_{a+b-x}^b (f)
\end{align*}
and the inequality \eqref{2.3} is proved.

Utilising the second part of the second inequality in \eqref{2.1} and  H\"older's inequality,  we deduce that
\begin{align}\label{2.8}
 &(b-a)Q(x)\nonumber\\\le &
 \left[\bigvee_a^x (f)\right]^{\frac{1}{q}}\left[\int_a^x |t-a|^pd\left(\bigvee_a^t (f)\right)\right]^{\frac{1}{p}}
 +\left[\bigvee_x^{a+b-x} (f)\right]^{\frac{1}{q}}\left[\int_x^{a+b-x} \left|t-\frac{a+b}{2}\right|^pd\left(\bigvee_a^t (f)\right)\right]^{\frac{1}{p}}\nonumber\\&+\left[\bigvee_{a+b-x}^b (f)\right]^{\frac{1}{q}}\left[\int_{a+b-x}^b |t-b|^pd\left(\bigvee_a^t (f)\right)\right]^{\frac{1}{p}}\nonumber\\
 \le & \left[\bigvee_a^b (f)\right]^{\frac{1}{q}}\left[\int_a^x |t-a|^pd\left(\bigvee_a^t (f)\right)+\int_x^{a+b-x} \left|t-\frac{a+b}{2}\right|^pd\left(\bigvee_a^t (f)\right)\right.\nonumber\\&\left.+\int_{a+b-x}^b |t-b|^pd\left(\bigvee_a^t (f)\right)
 \right]^{\frac{1}{p}}.
\end{align}
Now, observe that
\begin{align*}
 R(x):= &  \int_a^x |t-a|^pd\left(\bigvee_a^t (f)\right)+\int_x^{a+b-x} \left|t-\frac{a+b}{2}\right|^pd\left(\bigvee_a^t (f)\right)+\int_{a+b-x}^b |t-b|^pd\left(\bigvee_a^t (f)\right) \\
 =& \int_a^x (t-a)^pd\left(\bigvee_a^t (f)\right)+\int_x^\frac{a+b}{2} \left(\frac{a+b}{2}-t\right)^pd\left(\bigvee_a^t (f)\right)\\
&+\int_\frac{a+b}{2}^{a+b-x} \left(t-\frac{a+b}{2}\right)^pd\left(\bigvee_a^t (f)\right)+\int_{a+b-x}^b (b-t)^pd\left(\bigvee_a^t (f)\right)\\
=& (x-a)^p\bigvee_a^x (f)-p\int_a^x (t-a)^{p-1} \bigvee_a^t (f) dt-\left(\frac{a+b}{2}-x\right)^p\bigvee_a^x (f)\\
&+p\int_x^\frac{a+b}{2} \left(\frac{a+b}{2}-t\right)^{p-1} \bigvee_a^t (f) dt+\left(\frac{a+b}{2}-x\right)^p\bigvee_a^{a+b-x}(f)\\
&-p\int_\frac{a+b}{2}^{a+b-x}\left(t-\frac{a+b}{2}\right)^{p-1}  \bigvee_a^t (f) dt-(x-a)^p\bigvee_a^{a+b-x}(f)+p\int_{a+b-x}^b(b-t)^{p-1}  \bigvee_a^t (f) dt\\
=&  \left[  \left(\frac{a+b}{2}-x\right)^p- \left(x-a\right)^p   \right]\bigvee_x^{a+b-x}(f)\\
&-p\int_a^x(t-a)^{p-1}\bigvee_a^t(f)dt
+p\int_x^{\frac{a+b}{2}} \left(\frac{a+b}{2}-t\right)^{p-1}\bigvee_a^t(f)dt\\
&-p\int_x^{\frac{a+b}{2}} \left(\frac{a+b}{2}-s\right)^{p-1}\bigvee_a^{a+b-s}(f)ds+p\int_a^x(s-a)^{p-1}\bigvee_a^{a+b-s}(f)ds\\
=&  \left[  \left(\frac{a+b}{2}-x\right)^p- \left(x-a\right)^p   \right]\bigvee_x^{a+b-x}(f)-p\int_a^{\frac{a+b}{2}}r_p(x,t)\mathrm{sgn}(x-t)\bigvee_a^t(f)dt\\
&
+p\int_a^{\frac{a+b}{2}}r_p(x,s)\mathrm{sgn}(x-s)\bigvee_a^{a+b-s}(f)ds\\
=&  \left[  \left(\frac{a+b}{2}-x\right)^p- \left(x-a\right)^p   \right]\bigvee_x^{a+b-x}(f)  +p\int_a^{\frac{a+b}{2}}r_p(x,t)\mathrm{sgn}(x-t)\bigvee_t^{a+b-t}(f)dt,
\end{align*}
where $r_p$ is given in \eqref{2.5}. Utilising \eqref{2.8}, we deduce the first part of \eqref{2.4}.

Since $\bigvee_a^\cdot$ is monotonic nondecreasing on $[a, b]$, we have
\begin{align*}
R(x)\le &  \left[  \left(\frac{a+b}{2}-x\right)^p- \left(x-a\right)^p   \right]\bigvee_x^{a+b-x}(f)
+p\left[\int_x^{\frac{a+b}{2}}\left(\frac{a+b}{2}-t\right)^{p-1} dt \right]\bigvee_a^{\frac{a+b}{2}}(f)  \\
&-p\left[\int_x^{\frac{a+b}{2}}\left(\frac{a+b}{2}-s\right)^{p-1} ds \right]\bigvee_a^{\frac{a+b}{2}}(f)
+p\left[\int_a^x\left(s-a\right)^{p-1} ds \right]\bigvee_a^b(f) \\
 =& \left[  \left(\frac{a+b}{2}-x\right)^p- \left(x-a\right)^p   \right]\bigvee_x^{a+b-x}(f) +(x-a)^p\bigvee_a^{b}(f)\\
 =&  (x-a)^p\bigvee_a^x (f)+\left(\frac{a+b}{2}-x\right)^p\bigvee_x^{a+b-x} (f)+(x-a)^p\bigvee_{a+b-x}^b (f),
\end{align*}
which proves \eqref{2.4}.
\end{proof}

\begin{corollary}
Under the assumptions of Theorem \ref{Th2.1} with $x=\frac{3a+b}{4}$, we have the refined  
trapezoid type inequalities
\begin{align}
&\left|\frac{f\left(\frac{3a+b}{4}\right)+f\left(\frac{a+3b}{4}\right)}{2}
-\frac{1}{b-a}\int_{a}^{b}f(t)dt\right|\nonumber\\
\leq & \frac{1}{b-a}  \int_a^{\frac{a+b}{2}}\mathrm{sgn}\left(\frac{3a+b}{4}-t\right)\bigvee_t^{a+b-t}(f)dt \nonumber\\
\leq & \frac{1}{b-a} \left[\bigvee_a^b (f)\right]^{\frac{1}{q}}\left[  p\int_a^{\frac{a+b}{2}}r_p(x,t)\mathrm{sgn}\left(\frac{3a+b}{4}-t\right)\bigvee_t^{a+b-t}(f)dt\right]^\frac{1}{p}\nonumber\\
\leq & \frac{1}{4} \bigvee_a^b (f),
\label{2.9}
\end{align}
where $r_p$ is given in \eqref{2.5}.
\end{corollary}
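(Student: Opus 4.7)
The plan is to obtain \eqref{2.9} by simply specializing Theorem \ref{Th2.1} to the value $x=\frac{3a+b}{4}$ and exploiting the fact that several coefficients collapse at this point. First I would verify that $a+b-x=\frac{a+3b}{4}$, so the left-hand side of \eqref{2.2} is exactly the left-hand side of \eqref{2.9}, and that $\left|\frac{x-\frac{3a+b}{4}}{b-a}\right|=0$, which instantly yields the final bound $\frac{1}{4}\bigvee_a^b(f)$ from the uniform estimates in \eqref{2.3} (or equivalently \eqref{2.4}).

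Next I would turn to the first non-trivial bound in \eqref{2.9}. The key observation is that the coefficient $2\left(\frac{3a+b}{4}-x\right)$ appearing in the definition \eqref{2.3} of $Q(x)$ vanishes when $x=\frac{3a+b}{4}$, so the first summand in $Q$ drops out entirely and leaves
\[
Q\!\left(\tfrac{3a+b}{4}\right)=\frac{1}{b-a}\int_a^{\frac{a+b}{2}}\mathrm{sgn}\!\left(\tfrac{3a+b}{4}-t\right)\bigvee_t^{a+b-t}(f)\,dt,
\]
giving the first inequality in \eqref{2.9}.

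For the intermediate $L^p$-type bound, I would apply the first estimate in \eqref{2.4}. Since $x-a=\frac{b-a}{4}=\frac{a+b}{2}-x$ when $x=\frac{3a+b}{4}$, the factor $\left(\frac{a+b}{2}-x\right)^p-(x-a)^p$ also vanishes, so the $\bigvee_x^{a+b-x}(f)$ term disappears and the first bound in \eqref{2.4} reduces precisely to the middle estimate in \eqref{2.9}. The final bound $\frac{1}{4}\bigvee_a^b(f)$ then follows either from the last inequality of \eqref{2.3}--\eqref{2.4} at $x=\frac{3a+b}{4}$, or by monotonicity as in the proof of Theorem \ref{Th2.1}.

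There is really no obstacle here beyond bookkeeping: the corollary is a direct substitution, and the only thing to check carefully is that the kernel $r_p$ defined in \eqref{2.5} is evaluated at $x=\frac{3a+b}{4}$ in both occurrences so that the factor $p$ and the $\mathrm{sgn}$ placement in \eqref{2.9} are consistent with those inherited from \eqref{2.4}.
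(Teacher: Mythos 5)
Your proposal is correct and is exactly how the paper obtains this corollary: it is stated without separate proof as the specialization $x=\frac{3a+b}{4}$ of Theorem \ref{Th2.1}, where the terms with factors $2\left(\frac{3a+b}{4}-x\right)$ and $\left(\frac{a+b}{2}-x\right)^p-(x-a)^p$ vanish and the final bound collapses to $\frac{1}{4}\bigvee_a^b(f)$. Your bookkeeping (including noting that $r_p(x,t)$ must be read with $x=\frac{3a+b}{4}$) matches the paper's intent.
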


\begin{corollary}
Under the assumptions of Theorem \ref{Th2.1} with $x=a$, we have the refined  
trapezoid inequalities
\begin{align}
&\left|\frac{f\left(a\right)+f\left(b\right)}{2}
-\frac{1}{b-a}\int_{a}^{b}f(t)dt\right|\nonumber\\
\leq &\frac{1}{2} \bigvee_a^b (f)- \frac{1}{b-a}  \int_a^{\frac{a+b}{2}} \left(\bigvee_t^{a+b-t}(f)\right)dt \nonumber\\
\leq & \frac{1}{b-a} \left[\bigvee_a^b (f)\right]^{\frac{1}{q}}\left[  \left(\frac{b-a}{2}\right)^p \bigvee_a^b (f) -p\int_a^{\frac{a+b}{2}} \left(\frac{a+b}{2}-t\right)^{p-1}\bigvee_t^{a+b-t}(f)dt\right]^\frac{1}{p}\nonumber\\
\leq & \frac{1}{2} \bigvee_a^b (f).
\label{2.10}
\end{align}
\end{corollary}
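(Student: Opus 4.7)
The plan is to derive \eqref{2.10} by specialising the chain of inequalities \eqref{2.2}--\eqref{2.4} of Theorem~\ref{Th2.1} at the admissible endpoint $x=a$; since this is a pure substitution, the whole task reduces to careful simplification line by line.

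First, on the left-hand side, $x=a$ forces $a+b-x=b$, so $\frac{f(x)+f(a+b-x)}{2}$ collapses to $\frac{f(a)+f(b)}{2}$, exactly the quantity under consideration. Next I would evaluate $Q(a)$ from \eqref{2.3}: the prefactor $2\bigl(\frac{3a+b}{4}-a\bigr)$ equals $\frac{b-a}{2}$; the ``middle'' variation $\bigvee_x^{a+b-x}(f)$ becomes the full $\bigvee_a^b(f)$; and on $\bigl(a,\frac{a+b}{2}\bigr]$ one has $\mathrm{sgn}(a-t)=-1$, so the signed integral reduces to $-\int_a^{(a+b)/2}\bigvee_t^{a+b-t}(f)\,dt$. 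Dividing by $b-a$ yields the first upper bound in \eqref{2.10}.

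For the H\"older-type refinement I would substitute $x=a$ into \eqref{2.4}. The polynomial coefficient $\bigl(\frac{a+b}{2}-x\bigr)^p-(x-a)^p$ reduces to $\bigl(\frac{b-a}{2}\bigr)^p$; the ``$[a,x]$'' branch in the definition of $r_p$ degenerates to the single point $\{a\}$ and so contributes nothing to the integral; and the remaining branch combined with $\mathrm{sgn}(a-t)=-1$ produces the term $-p\int_a^{(a+b)/2}\bigl(\frac{a+b}{2}-t\bigr)^{p-1}\bigvee_t^{a+b-t}(f)\,dt$ inside the braces, matching the second line of \eqref{2.10}. Finally, the third line of \eqref{2.4} at $x=a$ reads $\bigl[\frac14+\bigl|\frac{a-(3a+b)/4}{b-a}\bigr|\bigr]\bigvee_a^b(f)=\bigl(\frac14+\frac14\bigr)\bigvee_a^b(f)=\frac12\bigvee_a^b(f)$, providing the final bound.

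Since the argument is a pure specialisation, there is no real obstacle; the only items requiring mild attention are the sign of $\mathrm{sgn}(a-t)$ at the endpoint $t=a$ (a single point, hence negligible under integration) and the collapse of the piecewise kernel $r_p(a,\cdot)$ to its second branch when $x=a$.
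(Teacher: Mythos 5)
Your proposal is correct and is exactly the route the paper takes: the corollary is a pure specialisation of Theorem~\ref{Th2.1} at $x=a$, and your evaluations of $Q(a)$, of the degenerate first branch of $r_p$, and of the final bound $\bigl(\tfrac14+\tfrac14\bigr)\bigvee_a^b(f)$ are all right. One caveat: when you write the surviving branch of $r_p(a,\cdot)$ as $\left(\frac{a+b}{2}-t\right)^{p-1}$ you are not following the printed definition \eqref{2.5}, whose second branch reads $\left(\frac{a+b}{2}-x\right)^{p-1}$ and at $x=a$ would give the constant $\left(\frac{b-a}{2}\right)^{p-1}$; the computation of $R(x)$ in the proof of Theorem~\ref{Th2.1} shows that \eqref{2.5} contains a typo and that the $t$-dependent kernel you used is the intended one, so your second line does match \eqref{2.10}, but this discrepancy deserved an explicit remark rather than a silent correction.
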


\begin{corollary}
Under the assumptions of Theorem \ref{Th2.1} with $x=\frac{a+b}{2}$, we have the refined  
midpoint inequalities
\begin{align}
&\left|f\left(\frac{a+b}{2}\right)
-\frac{1}{b-a}\int_{a}^{b}f(t)dt\right|\nonumber\\
\leq &\frac{1}{b-a}  \int_a^{\frac{a+b}{2}} \left(\bigvee_t^{a+b-t}(f)\right)dt \nonumber\\
\leq & \frac{1}{b-a} \left[\bigvee_a^b (f)\right]^{\frac{1}{q}}\left[ p\int_a^{\frac{a+b}{2}} \left(t-a\right)^{p-1}\bigvee_t^{a+b-t}(f)dt\right]^\frac{1}{p}\nonumber\\
\leq & \frac{1}{2} \bigvee_a^b (f).
\label{2.11}
\end{align}
\end{corollary}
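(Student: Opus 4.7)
The plan is to derive this corollary directly from Theorem \ref{Th2.1} by specializing to $x = \frac{a+b}{2}$ and simplifying. First, I would note the left-hand side matches: since $f(x) + f(a+b-x) = 2f\left(\frac{a+b}{2}\right)$ at this value of $x$, the quantity on the left of \eqref{2.2} collapses to $\left|f\left(\frac{a+b}{2}\right) - \frac{1}{b-a}\int_a^b f(t)\,dt\right|$, which is what appears in \eqref{2.11}. So the entire task is to evaluate the three bounds in \eqref{2.3} and \eqref{2.4} at $x = \frac{a+b}{2}$.

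Two structural simplifications do almost all the work. Because $[x, a+b-x]$ degenerates to a single point at $x = \frac{a+b}{2}$, we have $\bigvee_x^{a+b-x}(f) = 0$. This immediately kills the first summand in the definition of $Q(x)$ in \eqref{2.3} and kills the $\left[\left(\frac{a+b}{2}-x\right)^p - (x-a)^p\right]\bigvee_x^{a+b-x}(f)$ term inside the braces of \eqref{2.4}. Moreover, on the integration range $t \in [a,\frac{a+b}{2})$ we have $\mathrm{sgn}(x-t) = +1$, and the second branch $t \in \left(x, \frac{a+b}{2}\right]$ in the definition \eqref{2.5} of $r_p(x,t)$ becomes empty, so $r_p(x,t)$ reduces to $(t-a)^{p-1}$ throughout $[a, \frac{a+b}{2}]$. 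Feeding these observations into the first inequality of \eqref{2.3} produces the first bound in \eqref{2.11}, and feeding them into the first inequality of \eqref{2.4} produces the middle bound involving $p$ and $q$.

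For the final bound $\frac{1}{2}\bigvee_a^b(f)$, the cleanest route is just to specialize the last line of \eqref{2.4}: at $x = \frac{a+b}{2}$ the quantity $\left|\frac{x - \frac{3a+b}{4}}{b-a}\right|$ equals $\frac{1}{4}$, so $\frac{1}{4} + \frac{1}{4} = \frac{1}{2}$. As a sanity check one can rederive this by inserting $\bigvee_t^{a+b-t}(f) \leq \bigvee_a^b(f)$ into the preceding expression and using $p\int_a^{(a+b)/2}(t-a)^{p-1}\,dt = \left(\frac{b-a}{2}\right)^p$, which yields exactly $\frac{1}{2}\bigvee_a^b(f)$ after the $1/q$ and $1/p$ powers combine.

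There is no real obstacle here; the proof is pure bookkeeping. The only mildly delicate point is confirming that the telescoping identities in the proof of Theorem \ref{Th2.1} remain valid at the boundary value $x = \frac{a+b}{2}$, where some integrals are over empty intervals; but each such vanishing integral is consistent with the formal expression, so no separate argument is needed.
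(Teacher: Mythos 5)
Your proposal is correct and matches the paper's (implicit) argument: the corollary is obtained exactly by setting $x=\frac{a+b}{2}$ in Theorem \ref{Th2.1}, noting that $\bigvee_x^{a+b-x}(f)=0$, $\mathrm{sgn}(x-t)=+1$ a.e.\ on $[a,\frac{a+b}{2}]$, and $r_p(x,t)=(t-a)^{p-1}$ there. All of your evaluations, including the final constant $\frac14+\frac14=\frac12$, check out.
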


\begin{remark} \label{re1}
The inequalities \eqref{2.2}-\eqref{2.4} provide a refinement of the companion of Ostrowski inequality \eqref{1.3}, and the inequalities \eqref{2.10}, \eqref{2.11} and \eqref{2.9} are refinements of the trapezoid inequality, the mid-point inequality  and  the trapezoid type inequality \eqref{1.4}, respectively, which were obtained in \cite{d1999}, \cite{d2000} and  \cite{d2002},   respectively.
\end{remark}

A new inequality of Ostrowski's type may be stated as follows:
\begin{corollary}\label{co2.4}
Let $f$ be as in Theorem \ref{Th2.1}. Additionally, if $f$ is symmetric about the line $x=\frac{a+b}{2}$,
i.e., $f (a + b - x) = f (x)$,
 then for all $x\in[a,\frac{a+b}{2}]$ we have
\begin{equation}
\left|f(x)-\frac{1}{b-a}\int_{a}^{b}f(t)dt\right|\leq
Q(x),
\end{equation}
where $Q(x)$ satisfies \eqref{2.3} and \eqref{2.4}.
\end{corollary}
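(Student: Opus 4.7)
The plan is very short: this is a direct specialization of Theorem \ref{Th2.1} using the symmetry hypothesis. The key observation is that when $f(a+b-x)=f(x)$ holds for all $x$, the averaged value appearing on the left-hand side of \eqref{2.2} collapses to a single evaluation, since
\[
\frac{f(x)+f(a+b-x)}{2} = \frac{f(x)+f(x)}{2} = f(x).
\]
Thus the expression $\left|\frac{f(x)+f(a+b-x)}{2}-\frac{1}{b-a}\int_a^b f(t)\,dt\right|$ bounded in Theorem \ref{Th2.1} is identically equal to $\left|f(x)-\frac{1}{b-a}\int_a^b f(t)\,dt\right|$ on the range $x\in[a,\tfrac{a+b}{2}]$.

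First I would verify that symmetry of $f$ does not damage the bounded-variation hypothesis, which is immediate since $x\mapsto f(a+b-x)$ is a reflection and preserves total variation; therefore Theorem \ref{Th2.1} applies without modification and the quantity $Q(x)$ it produces is unchanged. Then I would simply substitute $f(a+b-x)=f(x)$ into \eqref{2.2} to obtain the claimed Ostrowski-type bound, and invoke the chain of inequalities \eqref{2.3}--\eqref{2.4} verbatim to confirm that the $Q(x)$ appearing in the corollary satisfies the stated refinements.

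There is no genuine obstacle; the only thing to be careful about is the restriction $x\in[a,\tfrac{a+b}{2}]$ inherited from Theorem \ref{Th2.1}. By symmetry, the analogous inequality on $[\tfrac{a+b}{2},b]$ could be derived by replacing $x$ with $a+b-x$, but the statement of the corollary only asks for the half-interval, so no further argument is required. The proof is thus essentially a one-line deduction from Theorem \ref{Th2.1}.
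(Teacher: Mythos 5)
Your proposal is correct and is exactly the intended argument: the paper states this corollary without proof precisely because it follows by substituting $f(a+b-x)=f(x)$ into \eqref{2.2}. Nothing further is needed.
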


\begin{remark} \label{re2}
Under the assumptions of Corollary \ref{co2.4} with $x=a$, we have
\begin{align}
&\left|f(a)
-\frac{1}{b-a}\int_{a}^{b}f(t)dt\right|\nonumber\\
\leq &\frac{1}{2} \bigvee_a^b (f)- \frac{1}{b-a}  \int_a^{\frac{a+b}{2}} \left(\bigvee_t^{a+b-t}(f)\right)dt \nonumber\\
\leq & \frac{1}{b-a} \left[\bigvee_a^b (f)\right]^{\frac{1}{q}}\left[  \left(\frac{b-a}{2}\right)^p \bigvee_a^b (f) -p\int_a^{\frac{a+b}{2}} \left(\frac{a+b}{2}-t\right)^{p-1}\bigvee_t^{a+b-t}(f)dt\right]^\frac{1}{p}\nonumber\\
\leq & \frac{1}{2} \bigvee_a^b (f).
\label{2.13}
\end{align}
\end{remark}

\section{Application to probability density functions}

Now, let $X$ be a random variable taking values in the finite interval
$[a,b]$, with the probability density function $f : [a, b]\rightarrow [0, 1]$ and with
the cumulative distribution function $$F (x) = Pr (X \leq  x) = \int_a^x f (t) dt.$$

The following results hold:
\begin{theorem}\label{Th4.1}
With the assumptions of Theorem \ref{Th2.1}, we have
\begin{equation}
\left|\frac{1}{2}[F(x)+F(a+b-x)]-\frac{b-E(X)}{b-a}\right|\leq T(x)\label{4.1}
\end{equation}
for all $x\in[a,\frac{a+b}{2}]$, where $E (X)$ is the expectation of $X$ and
\begin{align}\label{4.2}
T(x):=&\frac{1}{b-a}\left[ 2 \left(\frac{3a+b}{4}-x\right)[F(a+b-x)-F(x)]+\int_a^{\frac{a+b}{2}}\mathrm{sgn}(x-t)[F(a+b-t)-F(t)]dt\right]\nonumber\\
\leq&\frac{1}{b-a}\left[2 \left(\frac{3a+b}{4}-x\right)[F(a+b-x)-F(x)]+(x-a)\right]
\nonumber
\\
\leq&\frac{1}{4}
+\left|\frac{x-\frac{3a+b}{4}}{b-a}\right|.
\end{align}

We also have
\begin{align}\label{4.3}
T(x)\leq &\frac{1}{b-a} \left\{   \left[  \left(\frac{a+b}{2}-x\right)^p- \left(x-a\right)^p   \right][F(a+b-x)-F(x)]\right.\nonumber\\&\left.+p\int_a^{\frac{a+b}{2}}r_p(x,t)\mathrm{sgn}(x-t)[F(a+b-t)-F(t)]dt\right\}^\frac{1}{p}\nonumber\\
\leq&\frac{1}{b-a} \left\{    \left[  \left(\frac{a+b}{2}-x\right)^p- \left(x-a\right)^p   \right][F(a+b-x)-F(x)]+(x-a)^p \right\}^\frac{1}{p}\nonumber\\
\leq& \frac{1}{4}
+\left|\frac{x-\frac{3a+b}{4}}{b-a}\right|,
\end{align}
where $p>1,\ \frac{1}{p}+\frac{1}{q}=1$ and  $r_p$ is given in \eqref{2.5}.
\end{theorem}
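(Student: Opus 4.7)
The plan is to deduce Theorem \ref{Th4.1} by applying Theorem \ref{Th2.1} directly to the cumulative distribution function $F$ in place of the generic bounded-variation function $f$. First I would verify the hypotheses: since $F$ is monotone nondecreasing on $[a,b]$ (as the integral of a nonnegative density), it is of bounded variation, and for any $s\leq t$ in $[a,b]$ one has
\[
\bigvee_s^t(F)=F(t)-F(s).
\]
In particular, $\bigvee_a^b(F)=F(b)-F(a)=1$ and $\bigvee_t^{a+b-t}(F)=F(a+b-t)-F(t)$, which is the quantity appearing inside $T(x)$.

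Next, I would compute the integral average $\frac{1}{b-a}\int_a^b F(t)\,dt$ by integration by parts, using $F(a)=0$, $F(b)=1$, and $F'=f$ almost everywhere, to obtain
\[
\int_a^b F(t)\,dt = bF(b)-aF(a)-\int_a^b t f(t)\,dt = b - E(X).
\]
Substituting $F$ for $f$ in \eqref{2.2} then produces exactly the left-hand side of \eqref{4.1} and gives the first line of \eqref{4.2} as $Q(x)$ evaluated at $F$.

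The remaining bounds in \eqref{4.2} and \eqref{4.3} follow from \eqref{2.3} and \eqref{2.4} applied to $F$, with a short algebraic simplification. The key observation is that because $F$ is monotone,
\[
\bigvee_a^x(F)+\bigvee_{a+b-x}^b(F) = \bigvee_a^b(F) - \bigvee_x^{a+b-x}(F) = 1-[F(a+b-x)-F(x)],
\]
so the two outer terms $(x-a)\bigvee_a^x(F)$ and $(x-a)\bigvee_{a+b-x}^b(F)$ in \eqref{2.3} collapse into $(x-a)-(x-a)[F(a+b-x)-F(x)]$. Combining this with the middle term $\bigl(\frac{a+b}{2}-x\bigr)[F(a+b-x)-F(x)]$ and regrouping yields $(x-a)+2\bigl(\frac{3a+b}{4}-x\bigr)[F(a+b-x)-F(x)]$, which is the second line of \eqref{4.2}. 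The third line of \eqref{4.2} is inherited verbatim from \eqref{2.3}. The $p$-version \eqref{4.3} is handled identically: the prefactor $[\bigvee_a^b(F)]^{1/q}=1$ disappears, and the same telescoping reduces $(x-a)^p\bigvee_a^x(F)+(x-a)^p\bigvee_{a+b-x}^b(F)$ to $(x-a)^p-(x-a)^p[F(a+b-x)-F(x)]$, which when added to the middle term produces the stated form.

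I expect no genuine obstacle here: the whole argument is a specialization of Theorem \ref{Th2.1}, with the only non-trivial bookkeeping being the telescoping identity above, which hinges entirely on the monotonicity of $F$. The trickiest point is simply to recognize that the three separate total-variation quantities in \eqref{2.3} and \eqref{2.4} collapse to a single expression $F(a+b-x)-F(x)$ when $f$ is a CDF, together with a boundary term $(x-a)$ or $(x-a)^p$.
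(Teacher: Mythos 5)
Your proposal is correct and follows exactly the paper's route: the paper likewise obtains Theorem \ref{Th4.1} by substituting $f=F$ into \eqref{2.2}--\eqref{2.4} and using $E(X)=\int_a^b t\,dF(t)=b-\int_a^b F(t)\,dt$. Your additional bookkeeping (monotonicity of $F$ giving $\bigvee_s^t(F)=F(t)-F(s)$, $\bigvee_a^b(F)=1$, and the telescoping that produces the $(x-a)$ and $(x-a)^p$ terms) is exactly the simplification the paper leaves implicit, and it checks out.
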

\begin{proof}  By \eqref{2.2}-\eqref{2.4} on choosing $f = F$ and taking into account
$$E(X)=\int_a^b t dF(t)=b-\int_a^b F(t)dt,$$
we obtain \eqref{4.1}-\eqref{4.3}.
\end{proof}

\begin{corollary}
Under the  assumptions of Theorem \ref{Th4.1} with $x=\frac{3a+b}{4}$, we have
\begin{align}\label{4.4}
&\left|\frac{1}{2}\left[F\left(\frac{3a+b}{4}\right)+F\left(\frac{a+3b}{4}\right)\right]-\frac{b-E(X)}{b-a}\right|\nonumber\\
\leq & \frac{1}{b-a}  \int_a^{\frac{a+b}{2}}\mathrm{sgn}\left(\frac{3a+b}{4}-t\right)[F(a+b-t)-F(t)]dt \nonumber\\
\leq & \frac{1}{b-a} \left[  p\int_a^{\frac{a+b}{2}}r_p(x,t)\mathrm{sgn}\left(\frac{3a+b}{4}-t\right)[F(a+b-t)-F(t)]dt\right]^\frac{1}{p}\nonumber\\
\leq & \frac{1}{4},
\end{align}
where $r_p$ is given in \eqref{2.5}.
\end{corollary}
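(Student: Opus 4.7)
The plan is to specialize Theorem~\ref{Th4.1} at $x=\tfrac{3a+b}{4}$ and simplify. First I note that with this choice we have $a+b-x=\tfrac{a+3b}{4}$, so the left-hand side of \eqref{4.1} coincides with the left-hand side of the claimed inequality \eqref{4.4}. Second, because $x-\tfrac{3a+b}{4}=0$, the leading term $2\bigl(\tfrac{3a+b}{4}-x\bigr)[F(a+b-x)-F(x)]$ in the definition \eqref{4.2} of $T(x)$ vanishes. Thus the first of the bounds on $T(x)$ in \eqref{4.2} collapses to the single integral appearing on the second line of \eqref{4.4}.

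For the third line of \eqref{4.4}, I would invoke the first inequality of \eqref{4.3} and exploit the same vanishing phenomenon: at $x=\tfrac{3a+b}{4}$ we have $x-a=\tfrac{a+b}{2}-x=\tfrac{b-a}{4}$, hence $\bigl(\tfrac{a+b}{2}-x\bigr)^{p}-(x-a)^{p}=0$. Consequently the coefficient of $[F(a+b-x)-F(x)]$ in the first line of \eqref{4.3} disappears and only the integral term survives, yielding the third line of \eqref{4.4}. The final numerical bound $\tfrac{1}{4}$ is just the specialization of $\tfrac{1}{4}+\bigl|\tfrac{x-\frac{3a+b}{4}}{b-a}\bigr|$ at $x=\tfrac{3a+b}{4}$.

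There is no real obstacle here: the corollary is a direct substitution consequence of Theorem~\ref{Th4.1}, exactly paralleling the way the earlier trapezoid-type corollary at $x=\tfrac{3a+b}{4}$ was extracted from Theorem~\ref{Th2.1}. The only bookkeeping point worth double-checking is the sign behavior of $\mathrm{sgn}\bigl(\tfrac{3a+b}{4}-t\bigr)$ on $[a,\tfrac{a+b}{2}]$, which changes sign precisely at $t=\tfrac{3a+b}{4}$ and therefore correctly produces the signed integrals written in \eqref{4.4}; no cancellation or mis-orientation can occur because $\tfrac{3a+b}{4}$ lies in the interior of the integration interval $[a,\tfrac{a+b}{2}]$.
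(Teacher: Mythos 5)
Your proof is correct and matches the paper's (implicit) argument: the corollary is obtained by direct substitution of $x=\tfrac{3a+b}{4}$ into Theorem~\ref{Th4.1}, with the terms $2\bigl(\tfrac{3a+b}{4}-x\bigr)[F(a+b-x)-F(x)]$ and $\bigl[(\tfrac{a+b}{2}-x)^{p}-(x-a)^{p}\bigr][F(a+b-x)-F(x)]$ vanishing exactly as you note. The bookkeeping on $\mathrm{sgn}\bigl(\tfrac{3a+b}{4}-t\bigr)$ and the final constant $\tfrac14$ is also right.
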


\begin{remark} \label{re3}
The inequalities \eqref{4.1}-\eqref{4.4} provide refinements of the inequalities  given in \cite[Theorem 5]{d2002} and \cite[Corollary 4]{d2002}.
\end{remark}

\subsection*{Acknowledgments}
This work was partly supported by the National Natural Science Foundation
of China (Grant No. 40975002) and the Natural Science Foundation of the Jiangsu
Higher Education Institutions (Grant No. 09KJB110005).

\end{document}